\newtheorem{theorem}{Theorem}[section]
\numberwithin{equation}{section}
\newtheorem{lemma}[theorem]{Lemma}
\newtheorem{remark}[theorem]{Remark}
\newtheorem{claim}[theorem]{Claim}
\numberwithin{equation}{section}
\def\N{\mathbb{N}}
\def\Z{\mathbb{Z}}
\def\EE{\mathcal{E}}
\def\T{\mathcal{T}}
\renewcommand{\phi}{\varphi}
\renewcommand{\epsilon}{\varepsilon}
\newcommand{\1}{{\text{\Large $\mathfrak 1$}}}
\newcommand{\til}{\widetilde}
\newcommand{\tmix}{t_{\mathrm{mix}}}
\newcommand{\trel}{t_{\mathrm{rel}}}
\newcommand{\pr}[1]{\mathbb{P}\!\left(#1\right)}
\newcommand{\E}[1]{\mathbb{E}\!\left[#1\right]}
\newcommand{\estart}[2]{\mathbb{E}_{#2}\!\left[#1\right]}
\newcommand{\prstart}[2]{\mathbb{P}_{#2}\!\left(#1\right)}
\newcommand{\prcond}[3]{\mathbb{P}_{#3}\!\left(#1\;\middle\vert\;#2\right)}
\newcommand{\econd}[2]{\mathbb{E}\!\left[#1\;\middle\vert\;#2\right]}
\newcommand{\econds}[3]{\mathbb{E}_{#3}\!\left[#1\;\middle\vert\;#2\right]}
\newcommand{\var}[1]{\operatorname{Var}\!\left(#1\right)}
\newcommand{\vars}[2]{\operatorname{Var}_{#2}\!\left(#1\right)}
\newcommand{\tn}{|\kern-.1em|\kern-0.1em|}
\newcommand{\enk}[1]{\estart{#1}{n_k}}
\newcommand{\vnk}[1]{\operatorname{Var}_{n_k}\!\left(#1\right)}
\newcommand\be{\begin{equation}}
\newcommand\ee{\end{equation}}
\begin{document}

\title{\bf Total variation cutoff in a tree}

\author{
Yuval Peres\thanks{Microsoft Research, Redmond, Washington, USA; peres@microsoft.com} 
\and Perla Sousi\thanks{University of Cambridge, Cambridge, UK;   p.sousi@statslab.cam.ac.uk}
}
%\date{}
\maketitle
\thispagestyle{empty}

\begin{abstract}
We construct a family of trees on which a lazy simple random walk exhibits total variation cutoff. 
The main idea behind the construction is that hitting times of large sets should be concentrated around their means. For this sequence of trees we compute the mixing time, the relaxation time and the cutoff window. 
\newline
\newline
\emph{Keywords and phrases.} Mixing time, relaxation time, cutoff.
\newline
MSC 2010 \emph{subject classifications.}
Primary   60J10.   %Markov chains
\end{abstract}

\section{Introduction}

Let $X$ be an irreducible aperiodic Markov chain on a finite state space with stationary distribution~$\pi$ and transition matrix~$P$. The lazy version of~$X$ is a Markov chain with transition matrix~$(P+I)/2$. Let $\epsilon>0$. The $\epsilon$-total variation mixing time is defined to be
\[
\tmix(\epsilon) = \min\{ t\geq 0: \max_x\|P^t(x,\cdot) - \pi\|\leq \epsilon\},
\]
where $\|\mu - \nu\| = \sup_{A}|\mu(A) - \nu(A)|$ is the total variation distance between the measures $\mu$ and $\nu$.

We say that a sequence of chains $X^n$ exhibits total variation cutoff if for all $0<\epsilon <1$ 
\[
\lim_{n\to \infty} \frac{\tmix^{(n)}(\epsilon)}{\tmix^{(n)}(1-\epsilon)} = 1.
\]
We say that a sequence $w_n$ is a cutoff window for a family of chains $X^n$ if $w_n = o(\tmix(1/4))$ and for all $\epsilon>0$ there exists a positive constant $c_\epsilon$ such that for all $n$
\[
\tmix(\epsilon) - \tmix(1-\epsilon) \leq c_\epsilon w_n.
\]
Loosely speaking cutoff occurs when over a negligible period of time the total variation distance from stationarity drops abruptly from near $1$ to near $0$. It is standard that if $\trel$ and $\tmix$ are of the same order, then there is no cutoff (see for instance~\cite[Proposition~18.4]{LevPerWil}). From that it follows that a lazy simple random walk on the interval $[0,n]$ or a lazy simple random walk on a finite binary tree on $n$ vertices do not exhibit cutoff, since in both cases $\trel \asymp \tmix$. 

Although the above two extreme types of trees do not exhibit cutoff, in this paper we construct a sequence of trees, where a lazy simple random exhibits total variation cutoff. We start by describing the tree and then state the results concerning the mixing and the relaxation time of the lazy simple random walk on it.

Let $n_j=2^{2^j}$ for $j\in \N$. We construct the tree $\T$ of Figure~\ref{fig:tree} by placing a binary tree at the origin consisting of $N=n_k^3$ vertices. Then for all $j\in \{[k/2],\ldots, k\}$ we place a binary tree at distance $n_j$ from the origin consisting of $N/n_j$ vertices.

For each $j$ we call $\T_j$ the binary tree attached at distance $n_j$ and $\T_0$ the binary tree at $0$. We abuse notation and denote by $n_j$ the root of $\T_j$ and by $0$ the root of $\T_0$.

\begin{figure}[h!]
\begin{center}
\includegraphics[scale=0.85]{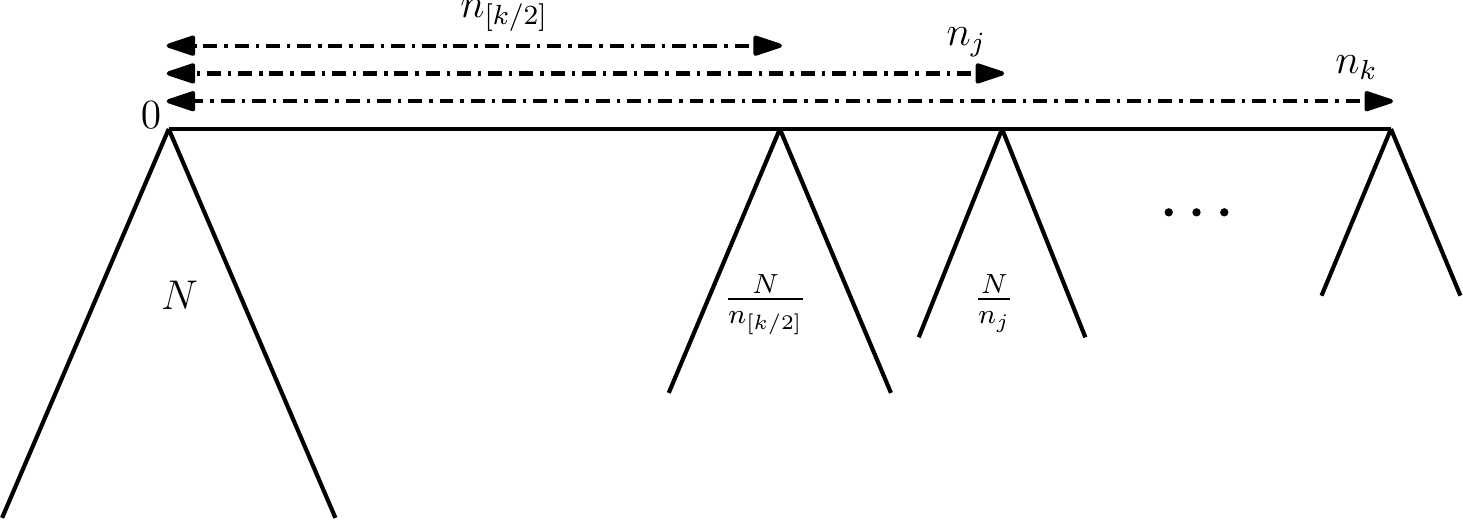}
\caption{\label{fig:tree} The tree $\T$ (not drawn to scale)}
\end{center}
\end{figure}

\begin{theorem}\label{thm:tree}
The lazy simple random walk on the tree $\T$ exhibits total variation cutoff and for all $\epsilon$ 
\[
\tmix(\epsilon) \sim 6Nk. 
\]
Further, the cutoff window is of size $N\sqrt{k}$, i.e.\ for all $0<\epsilon<1$
\[
\tmix(\epsilon) - \tmix(1-\epsilon) \leq c_\epsilon N\sqrt{k},
\]
where $c_\epsilon$ is a positive constant.
\end{theorem}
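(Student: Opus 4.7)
The strategy is to reduce the cutoff claim to the concentration of a hitting time. Observe that $|V(\T)| = N(1+o(1))$ and $|\T_0| = N$, so $\pi(\T_0) = 1-o(1)$; moreover $\T_0$ is attached to the rest of $\T$ only at its root $0$, so for any starting vertex outside $\T_0$ we have $T_{\T_0} = T_0$. Taking the worst starting vertex $x^{\ast} = n_k$, the plan is to establish $\estart{T_0}{n_k} \sim 6Nk$ together with $\vars{T_0}{n_k} = O(N^2 k)$. Given these, the general hitting-time characterization of cutoff for lazy random walks on trees yields total variation cutoff at time $\sim 6Nk$ with window $O(N\sqrt{k})$, as claimed.

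The key step is the expectation. Exploiting the tree structure, the walk from $n_k$ must pass through $n_{k-1}, n_{k-2}, \ldots, n_{[k/2]}$ before reaching $0$, so setting $n_{[k/2]-1} := 0$, the strong Markov property gives
\[
T_0 = \sum_{j=[k/2]}^{k} \tau_j, \qquad \tau_j \eqdist T_{n_{j-1}} \text{ under } \Prob_{n_j},
\]
with the $\tau_j$ independent. To estimate $\estart{\tau_j}{n_j}$ I would work on the subtree $S_j$ of vertices accessible from $n_j$ before hitting $n_{j-1}$, namely the path from $n_{j-1}$ to $n_k$ together with $\T_j, \ldots, \T_k$. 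Because $n_{j+1} = n_j^2$ makes successive subtrees geometrically smaller, $|E(S_j)| \sim |\T_j| = N/n_j$. The commute-time identity on $S_j$ reads
\[
\estart{\tau_j}{n_j} + \estart{T_{n_j}}{n_{j-1}} = 2|E(S_j)|(n_j - n_{j-1}),
\]
and the second term is at most $(n_j - n_{j-1})^2 = o(N)$ by a gambler's-ruin estimate on the unbranched path (the walk cannot enter $\T_j$ before hitting $n_j$). This yields $\estart{\tau_j}{n_j}$ of order $N$, and summing over the $\asymp k/2$ values of $j$ gives $\estart{T_0}{n_k}$ of order $Nk$. Pinning down the sharp constant $6$ requires careful local bookkeeping: the degree at an interior $n_j$ is $4$, while at $n_k$ and at the origin it is $3$; the last increment $\tau_{[k/2]}$ traverses an unbranched path of length $n_{[k/2]}$ with no intervening trees; and one must account for the doubling due to laziness.

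For the variance I would appeal to the reversible-chain spectral inequality $\vars{T_A}{x} \leq 2\,\trel \cdot \estart{T_A}{x}$. The relaxation time of the lazy walk on $\T$ is dominated by that of its largest subtree, so $\trel(\T) \asymp \trel(\T_0) \asymp N$. Hence $\vars{T_0}{n_k} \leq 2 N \cdot 6Nk = O(N^2 k)$, and the standard deviation is $O(N\sqrt{k})$. Since $N\sqrt{k} = o(Nk)$, the hitting time $T_0$ is concentrated around its mean, and the general hitting-time cutoff criterion then delivers the stated conclusion. The principal obstacle is the precise mean computation; the variance bound is a routine application of the spectral inequality, and the passage from hitting-time concentration to cutoff is by now standard.
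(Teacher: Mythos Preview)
Your reduction to concentration of $\tau_0$ from $n_k$ is the same starting point as the paper's, and the commute-time computation of $\estart{\tau_{n_{j-1}}}{n_j}$ on the subgraph $S_j$ is a legitimate alternative to the local-time/excursion decomposition used there (though, as you acknowledge, it does not by itself pin down the constant~$6$).

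The variance step is where the proposal breaks. The asserted ``spectral inequality'' $\vars{T_A}{x}\le 2\,\trel\,\estart{T_A}{x}$ is false for reversible chains, and it fails already on trees: for lazy simple random walk on a star with $n$ leaves one has $\trel=2$, while the hitting time of a fixed leaf from the centre has mean of order $n$ and variance of order $n^2$, so the ratio of the variance to $\trel$ times the mean is of order~$n$. There is no shortcut of this kind. The paper obtains $\vnk{\tau_0}\asymp N^2k$ by direct calculation from the decomposition $\tau_0=S+\sum_i D_i$ with $D_i=\sum_{\ell\le L_i}T_\ell^{(i)}$: each diagonal term $\vnk{D_i}$ is $\asymp N^2$ (giving $N^2k$ in total), and the cross-covariances are shown to be $O(N^2 n_j/n_i)$ for $i>j$ by splitting the local time $L_i$ at the first visit to $n_j$ and computing $\enk{L_{i,2}L_j}\asymp n_j^2$. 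That covariance estimate is the substantive content of Lemma~\ref{lem:concentration} and cannot be replaced by a spectral black box. Even if your inequality were valid, you would be assuming $\trel(\T)\asymp N$, which is precisely Theorem~\ref{thm:relax}; the paper proves it separately via a Poincar\'e-type argument (Claims~\ref{cl:srwline}--\ref{cl:restoftree}), and ``dominated by the largest subtree'' is not a proof, since one must rule out contributions from the path and from the hanging trees $\T_j$.

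For the matching upper bound on $\tmix(\epsilon)$ the paper does not invoke a general criterion; it constructs an explicit coupling. After $X$ first hits $0$, one shows that with high probability $X$ reaches the leaves of $\T_0$ and returns to $0$ within a further $O(N)$ steps, and under a level-matching coupling inside $\T_0$ this forces coalescence with a stationary walk $Y$. The extra $O(N)=o(N\sqrt{k})$ is what converts hitting-time concentration into the mixing-time upper bound. Your appeal to a ``general hitting-time characterization of cutoff for trees'' is too vague to substitute for this step.
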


By Chen and Saloff-Coste~\cite{ChenSaloff1} cutoff also holds for the continuous time random walk on $\T$.

The main ingredient in the proof of Theorem~\ref{thm:tree} is to establish the concentration of the first hitting time of $0$ starting from $n_k$. Once this has been completed, cutoff follows easily. In Section~\ref{sec:concentration}, we prove the concentration result of the hitting time, which then gives a lower bound on the mixing time. Then in Section~\ref{sec:coupling} we describe the coupling that will yield the matching upper bound on the mixing time.

\begin{remark}\rm{
We note that the same idea of showing concentration of hitting times was used in~\cite{DingLubPer} in order to establish cutoff for birth and death chains satisfying $\tmix \text{gap} \to \infty$. Connection of hitting times to cutoff is presented in greater generality in~\cite{CarFraSc}.
}
\end{remark}

It follows from Theorem~\ref{thm:tree} and~\cite[Proposition~18.4]{LevPerWil} that $\trel=o(\tmix(1/4))$. In the next theorem we give the exact order of the relaxation time. We prove it in Section~\ref{sec:relaxation}.

We use the notation $a_k\asymp b_k$ if there exists a constant $C$ such that $C^{-1} b_k \leq a_k \leq C b_k$  for all $k$ and we write $a_k\lesssim b_k$ if there exists a constant $C'$ such that $a_k\leq C'b_k$ for all $k$.

Let $1=\lambda_1\geq \lambda_2\geq \lambda_3,\ldots$ be the eigenvalues of a finite chain. Let 
$\lambda_*=\max_{i\geq 2}|\lambda_i|$ and define the relaxation time $\trel=(1-\lambda_*)^{-1}$.
Note that for a lazy chain $\lambda_*=\lambda_2$.

\begin{theorem}\label{thm:relax}
The relaxation time for the lazy simple random walk on the tree $\T$ satisfies
\[
\trel \asymp N.
\]
\end{theorem}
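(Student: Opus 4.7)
My plan is to prove the lower bound $\trel\gtrsim N$ by exhibiting an explicit test function supported on $\T_0$, and the upper bound $\trel\lesssim N$ via a variance decomposition combined with a discrete Hardy--Muckenhoupt inequality along the spine from $0$ to $n_k$.

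For the lower bound, let $L$ and $R$ denote the two balanced binary subtrees of $\T_0$ rooted at the children of $0$; by symmetry $\pi(L)=\pi(R)$, so $f:=\1_L-\1_R$ satisfies $\sum_x\pi(x)f(x)=0$. Then $\vars{f}{\pi}=\pi(L)+\pi(R)\asymp 1$, while only the two edges joining $0$ to its children in $\T_0$ contribute to the Dirichlet form, giving $\EE(f,f)\asymp 1/N$. The variational characterization then yields $\trel\geq\vars{f}{\pi}/\EE(f,f)\gtrsim N$.

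For the upper bound, fix $f$ with $\sum_x\pi(x)f(x)=0$ and, after shifting $f$ by a constant (which preserves both sides), assume $f(0)=0$, so that $\vars{f}{\pi}\leq\sum_{x}\pi(x)f(x)^{2}$. I would split this sum by the location of $x$. On $\T_0$, the Dirichlet eigenvalue at $0$ of a binary tree on $N$ vertices is $\asymp 1/N$, yielding $\sum_{x\in\T_0}\pi(x)f(x)^{2}\lesssim N\,\EE(f,f)$. On the spine $\{0,1,\dots,n_k\}$, a direct Cauchy--Schwarz along the path of length $n_k$ together with $n_k^{2}\leq N$ gives $\lesssim N\,\EE(f,f)$. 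On each $\T_j$, writing $f(x)=f(n_j)+(f(x)-f(n_j))$, the Dirichlet--Poincaré inequality on the binary tree $\T_j$ (eigenvalue $\asymp n_j/N$) bounds the within-$\T_j$ piece by $\lesssim(N/n_j)\,\EE(f,f)$, summing over $j$ to $\lesssim(N/n_{\lceil k/2\rceil})\,\EE(f,f)\ll N\,\EE(f,f)$. What remains is the cross term $\sum_j\pi(\T_j)\,f(n_j)^{2}\asymp\sum_j f(n_j)^{2}/n_j$.

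Bounding this cross term by $O(N\,\EE(f,f))$ is the crux. Since $f(0)=0$, we write $f(n_j)=\sum_{d=1}^{n_j}h(d)$ with $h(d):=f(d)-f(d-1)$ the spine increments, and invoke the discrete Hardy--Muckenhoupt inequality: $\sum_j u_j\,f(n_j)^{2}\leq 4M\sum_d h(d)^{2}$ whenever $M:=\sup_{n\geq 1}n\cdot\sum_{j:\,n_j\geq n}u_j<\infty$. Choosing $u_j=1/n_j$, the doubly exponential growth $n_j=2^{2^j}$ forces $M\leq 2$, because for $n\in(n_{l-1},n_l]$ the tail $\sum_{j\geq l}1/n_j$ is a geometric series dominated by $2/n_l\leq 2/n$. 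Hence $\sum_j f(n_j)^{2}/n_j\lesssim\sum_d h(d)^{2}\leq\sum_e(f(e^+)-f(e^-))^{2}\lesssim N\,\EE(f,f)$. Combining the four contributions gives $\vars{f}{\pi}\lesssim N\,\EE(f,f)$ and therefore $\trel\lesssim N$. The main obstacle is precisely this Hardy--Muckenhoupt step: the naive bound $f(n_j)^{2}\leq n_j\sum_{d\leq n_j}h(d)^{2}$ would double-count every spine edge near $0$ once per $\T_j$ lying downstream of it, producing a spurious factor of $k$ and only $\trel\lesssim Nk$; the doubly exponential construction of the $n_j$ is exactly what eliminates this over-counting.
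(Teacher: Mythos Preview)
Your proof is correct and follows the same overall architecture as the paper: variational characterization, normalize $f(0)=0$, split the $\pi$-weighted $L^2$ norm by location into $\T_0$, the spine, and the pendant trees $\T_j$, then control the residual term $\sum_j f(n_j)^2/n_j$ using the doubly exponential spacing of the $n_j$.

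Two localized differences are worth noting. For the lower bound you exhibit the test function $\1_L-\1_R$ on the two halves of $\T_0$; the paper instead applies the bottleneck ratio bound with $S=\T_0$. Both are one-line arguments. For the crux $\sum_j f(n_j)^2/n_j\lesssim\sum_d h(d)^2$ you invoke the discrete Hardy--Muckenhoupt criterion $M=\sup_n n\sum_{j:\,n_j\geq n}1/n_j<\infty$, whereas the paper proceeds by hand: it writes $h(n_j)^2\leq 2h(n_{j-1})^2+2(h(n_j)-h(n_{j-1}))^2$, applies Cauchy--Schwarz to each piece separately, divides by $n_j$, sums over $j$, and observes that the coefficient of each $\Delta_\ell^2$ is bounded because $\sum_j n_{j-1}/n_j<\infty$. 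Your route is cleaner and names the sharp criterion behind the estimate; the paper's is self-contained and avoids quoting an outside inequality. Both exploit exactly the feature you correctly flag as the obstacle---the naive Cauchy--Schwarz over-counts spine edges near $0$ by a factor $k$, and the doubly exponential growth is precisely what makes the relevant tail sum converge.
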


To the best of our knowledge the tree $\T$ is the first example of a tree for which $\tmix$ is not equivalent to $\trel$. A related problem was studied in~\cite{PeresSousi} and we recall it here.

Suppose that we assign conductances to the edges of a tree in such a way that $c\leq c(e)\leq c'$ for all edges $e$, where $c$ and $c'$ are two positive constants. It is proved in~\cite[Theorem~9.1]{PeresSousi} that the mixing time of the weighted lazy random walk is up to constants the same as the mixing time of the lazy simple random walk on the original tree. Since the relaxation time is given by a variational formula, it is immediate that after assigning bounded conductances $\trel$ is only changed up to multiplicative constants. Hence if in the original tree we have that $\trel$ and $\tmix$ are of the same order, then there is no way of assigning weights to the edges in order to make the weighted random walk exhibit cutoff.

\section{Concentration of the hitting time}\label{sec:concentration}

Let $X$ denote a lazy simple random walk on the tree $\T$. Define for all $x\in \T$
\[
\tau_x = \inf\{s\geq 0: X_s = x\}.
\]

\begin{lemma}\label{lem:concentration}
We have as $k\to \infty$
\[
\enk{\tau_0} = 6Nk+ o\left(N\sqrt{k}\right) \ \text{ and } \ \vnk{\tau_0}\asymp N^2 k.
\]
\end{lemma}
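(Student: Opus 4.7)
The plan is to decompose $\tau_0$ into a sum of independent hitting time increments using the strong Markov property at the spine milestones $n_k > n_{k-1} > \cdots > n_{\lceil k/2 \rceil} > 0$. Writing $\sigma_j = \tau_{n_j}$ (with $\sigma_k = 0$), the walker necessarily visits $n_{k-1}, n_{k-2}, \ldots, n_{\lceil k/2\rceil}$ in that order before hitting $0$, so the increments $\Delta_j := \sigma_{j-1} - \sigma_j$ for $j = k, \ldots, \lceil k/2 \rceil + 1$ together with $\Delta_* := \tau_0 - \sigma_{\lceil k/2\rceil}$ are mutually independent, each distributed as the hitting time of the next milestone starting from the current one. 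Consequently both $\enk{\tau_0}$ and $\vnk{\tau_0}$ decompose as sums of $\asymp k$ independent contributions.

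For the mean, I would invoke the standard tree hitting-time formula: for a (non-lazy) simple random walk on a tree, $\mathbb{E}_u[\tau_v] = \sum_x (2|T_x| - 1)$, where the sum is over vertices $x \neq v$ on the unique $u$--$v$ path and $T_x$ is the subtree hanging off $x$ away from $v$; the lazy version is obtained by multiplying by $2$. Applied to each $\Delta_j$, the subtree sizes along the segment $(n_{j-1}, n_j]$ are dominated by the attached trees $\T_l$ with $l \geq j$, and summing explicitly, while using the doubly-exponential spacing $n_{j+1} = n_j^2$ to discard lower-order contributions from the spine and from smaller trees, yields the asserted expansion $\enk{\tau_0} = 6Nk + o(N\sqrt{k})$.

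The variance is the main technical content. I would further decompose each $\Delta_j$ into excursions from $n_j$, where an excursion leaves $n_j$ in one of four directions (left and right along the spine, and the two children in $\T_j$) and either returns to $n_j$ or, in the case of the terminating excursion, reaches $n_{j-1}$. A short Gambler's-ruin computation on the segment $[n_{j-1}, n_j]$ shows that the per-excursion termination probability is $\asymp 1/n_j$, so the total number of excursions $W_j$ is geometric with $\mathbb{E}[W_j] \asymp n_j$ and $\var{W_j} \asymp n_j^2$. Given $W_j$, the number of excursions that enter $\T_j$ is approximately $\mathrm{Bin}(W_j, 1/2)$, and each such excursion has duration with mean and standard deviation both of order $N/n_j$, since hitting times on a binary tree of size $m$ are heavy-tailed with variance of order $m^2$. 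The random-sum identity $\var{\sum_{i=1}^M E_i} = \mathbb{E}[M]\var{E} + \var{M}\bigl(\mathbb{E}[E]\bigr)^2$ then gives $\var{\Delta_j} \asymp \var{W_j} \cdot (N/n_j)^2 \asymp n_j^2 \cdot (N/n_j)^2 = N^2$, while contributions from spine-only excursions and from the smaller trees $\T_l$, $l > j$, are suppressed by the doubly-exponential gaps (the walker reaches $n_{j+1}$ during $\Delta_j$ only with probability $\asymp 1/n_j$). Summing over the $\asymp k$ independent pieces gives $\vnk{\tau_0} \asymp N^2 k$.

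The main obstacle I foresee is establishing $\var{\Delta_j} \asymp N^2$ with matching upper and lower bounds. The lower bound is the more delicate: it requires the excursion times into $\T_j$ to have genuine second moment of order $(N/n_j)^2$ (heavy-tailed behaviour) and the number of excursions $W_j$ to exhibit its full geometric fluctuation rather than being damped by some coupling effect. A clean implementation should combine the explicit second-moment formulas for hitting times on the binary trees $\T_j$ with the random-sum decomposition and the strong Markov property at returns to $n_j$; quantifying the $o(N\sqrt{k})$ error in the mean, which is tight enough to yield the cutoff window $N\sqrt{k}$ in Theorem~\ref{thm:tree}, will require similarly careful bookkeeping of the sub-leading corrections.
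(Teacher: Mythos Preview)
Your proposal is correct and would lead to a complete proof, but it differs from the paper's argument in a structurally interesting way.

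The paper decomposes $\tau_0$ \emph{by location}: it writes $\tau_0 = S + \sum_i D_i$, where $S$ is the hitting time of $0$ for the projected walk on the spine $[0,n_k]$ and $D_i$ is the total time spent inside the binary tree $\T_i$. Each $D_i$ is a random sum of i.i.d.\ excursion lengths in $\T_i$, indexed by the local time $L_i$ of the spine walk at $n_i$; this makes $\var{D_i}\asymp N^2$ immediate from Claims~2.3--2.5. The price is that the $D_i$ are \emph{not} independent (they share the local-time randomness of $Y$ on $[0,n_k]$), so the paper must control the covariances: for $i>j$ it splits $L_i$ into the part before and after the first visit to $n_j$, shows the post-visit local time satisfies $\enk{L_{i,2}L_j}\asymp n_j^2$, and deduces $|\operatorname{Cov}(D_i,D_j)|\lesssim N^2 n_j/n_i$, which sums thanks to the doubly-exponential gaps.

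Your decomposition is \emph{by time}: the increments $\Delta_j=\sigma_{j-1}-\sigma_j$ are genuinely independent by the strong Markov property at the milestones, so $\vnk{\tau_0}=\sum_j\var{\Delta_j}$ with no cross terms at all. The cost is that each $\Delta_j$ is more complicated than the paper's $D_j$: it includes not only the excursions into $\T_j$ but also possible trips to the right of $n_j$ into $\T_{j+1},\ldots,\T_k$. Your sketch correctly identifies that the latter contribute only $O(N^2 n_j/n_{j+1})=o(N^2)$ to $\var{\Delta_j}$; making this precise is where the doubly-exponential spacing reappears in your approach. In effect, the paper pushes the ``interaction'' between scales into an explicit covariance computation, whereas you absorb it into the internal structure of each independent block. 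Both routes rely on the same two ingredients---Claim~\ref{cl:excursion} for tree-excursion moments and a gambler's-ruin count for visits to the $n_j$---and both need the $n_{j+1}=n_j^2$ spacing at exactly one point.
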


We will prove the above concentration lemma in this section. We start by stating standard results about hitting times and excursions that will be used in the proof of Lemma~\ref{lem:concentration}. We include their proofs for the sake of completeness.

\begin{claim}\label{cl:lazynonlazy}
Let $\tau$ and $\til{\tau}$ denote hitting times of the same state for a discrete time non-lazy and lazy walk respectively. Then
\[
\E{\til{\tau}} = 2\E{\tau} \ \ \text{ and } \ \ \var{\til{\tau}} = 4 \var{\tau} + 2\E{\tau},
\]
assuming that both the lazy and non-lazy walks start from the same vertex.
\end{claim}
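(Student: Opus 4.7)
The plan is to couple the two walks by inserting geometric waiting times, and then apply Wald's identity together with the conditional variance formula. Concretely, let $X_0, X_1, X_2, \ldots$ denote the non-lazy chain, and let $G_1, G_2, \ldots$ be i.i.d.\ Geometric random variables on $\{1,2,\ldots\}$ with success probability $1/2$, independent of $X$. Define $T_0 = 0$ and $T_i = G_1 + \cdots + G_i$, and set $\til X_t = X_i$ for $T_i \leq t < T_{i+1}$. Then $\til X$ is a version of the lazy walk coupled so that its $i$-th jump occurs at step $T_i$. In particular, if the non-lazy walk hits the target at its $\tau$-th step, the lazy walk hits it at time
\[
\til\tau \;=\; \sum_{i=1}^{\tau} G_i,
\]
with the convention that an empty sum is zero (so the formula is still correct when the walks start at the target and $\tau=0$).

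Since $\E{G_1} = 2$, and $\tau$ is independent of the sequence $(G_i)_{i\geq 1}$ (the $G_i$ are independent of $X$ by construction), Wald's identity gives
\[
\E{\til\tau} \;=\; \E{\tau}\cdot\E{G_1} \;=\; 2\E{\tau}.
\]
For the variance, note that $\var{G_1} = (1-1/2)/(1/2)^2 = 2$. Conditioning on $\tau$ and using the conditional variance formula, together with the independence of the $G_i$ from $\tau$, we obtain
\[
\var{\til\tau} \;=\; \E{\vars{\sum_{i=1}^\tau G_i}{\tau}} + \var{\econd{\sum_{i=1}^\tau G_i}{\tau}} \;=\; \E{\tau\cdot \var{G_1}} + \var{\tau \cdot \E{G_1}} \;=\; 2\E{\tau} + 4\var{\tau}.
\]

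There is essentially no obstacle here: once the coupling is written down, the result is just Wald plus the variance of a random sum, and the only point requiring a line of justification is the independence of the geometric clocks from the non-lazy trajectory (hence from $\tau$), which is built into the construction. The only mild care needed is to handle the $\tau = 0$ case via the empty-sum convention, and to record that $G_1$ has mean $2$ and variance $2$ under the chosen convention that $G_1$ is supported on $\{1,2,\ldots\}$ rather than $\{0,1,\ldots\}$.
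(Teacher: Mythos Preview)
Your proof is correct and follows essentially the same route as the paper: represent $\til\tau$ as a random sum $\sum_{i=1}^{\tau} G_i$ of i.i.d.\ geometric$(1/2)$ variables independent of the non-lazy chain, then apply Wald's identity for the mean and the law of total variance for the second moment. The paper's proof is the same argument, only more tersely stated.
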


\begin{proof}[{\bf Proof}]
It is easy to see that we can write $\til{\tau}= \sum_{i=0}^{\tau-1} \xi_i$, where $(\xi_i)_i$ is an i.i.d.\ sequence of geometric variables with success probability $1/2$. By Wald's identity we get
\[
\E{\til{\tau}} = \E{\tau} \E{\xi_1} = 2\E{\tau}.
\]
Using the independence between $\tau$ and the sequence $(\xi_i)_i$ gives the identity for the variance of $\til{\tau}$.
\end{proof}

\begin{claim}\label{cl:excursion}
Let $T$ be the time spent in an excursion from the root by a simple random walk in a binary tree of size $n$. Then 
\[
\E{T} = \frac{3n-1}{2} \ \ \text{ and } \ \  \E{T^2} \asymp n^2.
\]
\end{claim}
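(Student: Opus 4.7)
For the first moment $\E{T}$, I would invoke the standard return-time identity for an SRW on a graph, namely $\estart{\tau_r^+}{r} = 1/\pi(r) = 2|E|/\deg(r)$. For a binary tree on $n$ vertices, $|E| = n-1$, and substituting the appropriate degree of the root recovers the claimed formula $(3n-1)/2$. An equivalent route is the first-step decomposition $T = 1 + L$, where $L$ is the hitting time of the root from the first-step child; writing the usual system $h_v = 1 + \sum_u p(v,u) h_u$ and exploiting the tree's symmetry reduces $h$ to a one-dimensional linear recurrence in the depth $k$ of the form $h_k = 1 + \tfrac{1}{3} h_{k-1} + \tfrac{2}{3} h_{k+1}$ with leaf boundary $h_d = 1 + h_{d-1}$, which one solves explicitly.

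For the second moment, the lower bound $\E{T^2} \geq (\E{T})^2 \asymp n^2$ is immediate by Jensen. For the upper bound, I would exploit the self-similar structure of the tree. From a child $c$ of the root $r$, each visit to $c$ gives probability $1/3$ of stepping up to $r$ and probability $2/3$ of entering $c$'s subtree; hence the hitting time $L = T - 1$ decomposes as
\[
L = M + \sum_{i=1}^{M-1} L'_i,
\]
where $M \sim \mathrm{Geom}(1/3)$ (so $\E{M} = 3$, $\var{M} = 6$) and the $L'_i$ are i.i.d.\ copies of the hitting time from a child to the root in a binary subtree of size $(n-1)/2$, independent of $M$. A compound-sum variance calculation (Wald's second identity) combined with the inductive hypothesis $\var{L'_1} \lesssim (n/2)^2$ yields the recursion $V(n) \leq 2 V(n/2) + O(n^2)$ for $V(n) := \var{T}$. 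Solving this recursion gives $V(n) \lesssim n^2$, and combined with $(\E{T})^2 \asymp n^2$ we conclude $\E{T^2} \asymp n^2$.

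\textbf{Main obstacle.} The first moment is immediate once the correct accounting of edges and root degree is in place. The real technical step is the variance bound: one must correctly apply Wald's second identity to the compound sum above, handle the covariance term between $M$ and $\sum L'_i$, and verify that the recursion closes with a constant independent of $n$. A cleaner alternative that sidesteps the compound-sum computation is to solve the Poisson equation
\[
\estart{\tau_r^2}{v} - \sum_u p(v,u) \estart{\tau_r^2}{u} = 2 \estart{\tau_r}{v} - 1
\]
directly by the same depth recurrence used for the mean, since the right-hand side $2 \estart{\tau_r}{v} - 1$ is already known from the first-moment computation.
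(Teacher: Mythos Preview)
Your approach for $\E{T}$ via the return-time identity $\E{T} = 1/\pi(r) = 2|E|/\deg(r)$ is exactly what the paper does, in one line. For $\E{T^2}$, your argument is correct but takes a different and considerably more laborious route. The paper simply observes that $\max_x \estart{\tau_o}{x} \leq cn$, then applies Markov's inequality and the strong Markov property in blocks of length $2cn$ to obtain the geometric tail $\pr{T > 2cnk} \leq e^{-c'k}$, from which $\E{T^2} \lesssim n^2$ follows immediately by integrating the tail. Your recursive decomposition via the self-similarity of the binary tree --- writing $L = M + \sum_{i=1}^{M-1} L'_i$ with $M$ geometric, applying a compound-sum variance identity, and solving the recursion $V(n) \leq 2V(n/2) + O(n^2)$ --- does close correctly, but it is specific to the binary structure and requires handling the covariance term $\cov(M, \sum_i L'_i)$ with some care (as you note under ``main obstacle''). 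The paper's exponential-tail argument is both shorter and more robust: it uses nothing about the tree beyond the uniform hitting-time bound $\max_x \estart{\tau_o}{x} \lesssim n$, and would apply verbatim to any graph with that property.
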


\begin{proof}[{\bf Proof}]
It is standard that $\E{T} = \pi(o)^{-1} = (3n-1)/2$. It is easy to see that starting from any point~$x$ on the tree, the expected hitting time of the root is upper bounded by $cn$ for a constant $c$. Hence by performing independent experiments and using the Markov property we get for a positive constant~$c'$
\[
\pr{T>2kn} \leq e^{-c'k}.
\]
Therefore, we deduce that $\E{T^2} \leq c''n^2$.
\end{proof}

\begin{claim}\label{cl:localtime}
Let $X$ be a simple random walk on the interval $[0,n]$ started from $n$ and~$L_i$ be the number of visits to~$i$ before the first time $X$ hits $0$. Then $L_i$ is a geometric random variable with parameter~$(2i)^{-1}$.
\end{claim}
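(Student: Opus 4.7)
The plan is to apply the strong Markov property at successive visits to $i$, reducing the claim to a single-step computation starting from $i$. Since $X$ is nearest-neighbour and starts at $n\geq i \geq 1$, every trajectory hitting $0$ must pass through $i$, so $L_i \geq 1$ almost surely. By the strong Markov property applied at the first visit to $i$ and at each subsequent return, the excursions from $i$ are i.i.d.\ and each one independently either returns to $i$ before hitting $0$ or not. Hence $L_i$ is geometric with success parameter
\[
p := \prstart{\tau_0 < \tau_i^+}{i},
\]
where $\tau_i^+$ denotes the first positive return time to $i$.

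To compute $p$, I would condition on the first step of $X$ from $i$. With probability $1/2$ the walk moves to $i+1$; since the walk can only change by $\pm 1$, any path from $i+1$ reaching $0$ must first pass through $i$, so this contingency contributes $0$ to $p$. With probability $1/2$ the walk moves to $i-1$; a standard gambler's ruin computation on the interval $\{0,1,\ldots,i\}$ gives
\[
\prstart{\tau_0 < \tau_i}{i-1} = \frac{1}{i}.
\]
Combining,
\[
p = \tfrac{1}{2}\cdot 0 + \tfrac{1}{2}\cdot \tfrac{1}{i} = \frac{1}{2i},
\]
which is the claimed parameter.

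The argument is essentially routine; the only minor point requiring care is checking that from $i+1$ the walk almost surely returns to $i$ before reaching $0$, which is immediate from recurrence on the finite path combined with the nearest-neighbour property. I do not anticipate any real obstacle.
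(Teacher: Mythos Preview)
Your argument is correct and entirely standard: the strong Markov property reduces the question to computing $\prstart{\tau_0<\tau_i^+}{i}$, and the one-step decomposition together with gambler's ruin gives $1/(2i)$. The paper does not supply a proof of this claim at all, treating it as a well-known fact, so there is nothing further to compare.
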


%\begin{proof}[{\bf Proof}]
%Clearly starting from $n$ in order to hit $0$ the random walk $X$ has to hit $i$ at least once. Then from the gambler ruin estimate we get that after every visit to $i$ with probability $(2i)^{-1}$ it hits $0$ before returning to $i$.
%\end{proof}

We are now ready to give the proof of the concentration result.

\begin{proof}[{\bf Proof of Lemma~\ref{lem:concentration}}]

By Claim~\ref{cl:lazynonlazy} it suffices to consider a non-lazy random walk. We write~$\tau_0$ for the first hitting time of~$0$ for a simple random walk on the tree~$\T$. 

%To calculate the expectation of~$\tau_0$ we apply Aldous' formula \red{add reference} for hitting times on trees for a simple random walk and then multiply by $2$ to get the expected hitting time for a lazy walk by Claim~\ref{cl:lazynonlazy}. We thus have
%\begin{align*}
%\enk{\tau_0} =2 \sum_{\ell=\left[k/2\right]}^{k} n_{\ell} \frac{N}{n_{\ell}} + 2\sum_{\ell=1}^{n_k} \ell = 2N(k - [k/2]+1) + n_k(n_k+1) = Nk + o(N\sqrt{k}) \text{ as } k\to \infty.
%\end{align*}
%
%
%By Claim~\ref{cl:lazynonlazy} it suffices to consider a non-lazy random walk and show that its variance is of order~$N^2k$.
Every time we visit a vertex~$n_j$ for some~$j$ with probability~$1/2$ we make an excursion in the binary tree attached to this vertex. Since we are interested in the time it takes to hit~$0$ we can think of the problem in the following way: we replace a binary tree by a self-loop representing a delay which is the time spent inside the tree in an excursion from the root. It will be helpful to have Figure~\ref{fig:delay-fig} in mind.

\begin{figure}[h!]
\begin{center}
\includegraphics[scale=0.85]{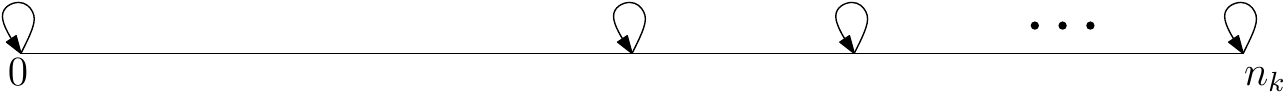}
\caption{\label{fig:delay-fig} Delays represented by self loops}
\end{center}
\end{figure}

Let~$Y$ be a simple random walk on the line~$[0,n_k]$ starting from~$n_k$. Let~$S$ be the time it takes~$Y$ to reach~$0$. 
For~$i=[k/2],\ldots, k$ we let~$L_i$ be the local time at~$n_i$ before the first time~$Y$ hits~$0$, i.e.
\[
L_i = \sum_{\ell=0}^{S} \1(Y_\ell=n_i). 
\]
For every vertex $n_i$ we let $(T_\ell^{(i)})_{\ell\leq L_i}$ be the delays incurred during the $L_i$ visits to $n_i$, i.e.
\[
T_\ell^{(i)} = \sum_{m=1}^{G_{i,\ell}} \xi_{m}^{(i)},
\]
where $(\xi_{m}^{(i)})_m$ is an i.i.d.\ sequence of excursions from $n_i$ in the binary tree rooted at $n_i$ and $G_{i,\ell}$ is an independent geometric random variable of success probability $1/2$. Note that the random variables $T_\ell^{(i)}$ are independent over different $i$ and $\ell$. Having defined these times we can now write
\begin{align}\label{eq:decomp}
\tau_0 = S + \sum_{i=[k/2]}^{k} \sum_{\ell=1}^{L_i} T_\ell^{(i)} = S+D,
\end{align}
where $D =  \sum_{i=[k/2]}^{k} \sum_{\ell=1}^{L_i} T_\ell^{(i)}$. 
From Claims~\ref{cl:localtime} and~\ref{cl:excursion} and the independence between~$L_i$ and $T_\ell^{(i)}$ using the above representation of~$\tau_0$ we immediately get
\[
\enk{\tau_0} = n_k^2 + \sum_{i=[k/2]}^{k}2n_i \left(3\frac{N}{n_i} -1\right) = 3Nk + o(N\sqrt{k}) \ \text{ as } k\to \infty,
\]
and hence multiplying by~$2$ gives the required expression.
We now turn to estimate the variance.
Using~\eqref{eq:decomp} we have
\begin{align*}
\vnk{\tau_0} &= \enk{\left( (S - \enk{S}) + (D - \enk{D})\right)^2 } 
\\
&= \vnk{S} + \vnk{D} + 2\enk{(S-\enk{S}) (D- \enk{D})}.
\end{align*}
Since $S$ is the first time that a simple random walk on $[0,n_k]\cap \Z$ hits $0$ started from $n_k$ it follows that 
\begin{align}\label{eq:vars}
\vnk{S} \asymp n_k^4 = o(N^2 k).
\end{align}
By Cauchy Schwarz we get
\[
\enk{(S-\enk{S}) (D- \enk{D})} \leq \sqrt{\vnk{S} \vnk{D}},
\]
so if we prove that 
\begin{align}\label{eq:goal}
\vnk{D} \asymp N^2 k,
\end{align}
then using~\eqref{eq:vars} we get $\sqrt{\vnk{S} \vnk{D}} \asymp N n_k^2 \sqrt{k} = o(N^2 k)$, and hence $\vnk{\tau_0} \asymp N^2 k$. Therefore, it suffices to show~\eqref{eq:goal}.

To simplify notation further we write $D_i = \sum_{\ell=1}^{L_i} T_\ell^{(i)}$.  We have
\begin{equation}
\begin{split}
\label{eq:variance}
\vnk{D} &= \sum_{i,j=[k/2]}^{k} \enk{(D_i - \enk{D_i})(D_j - \enk{D_j})}\\&= \sum_{j=[k/2]}^{k} \vnk{D_j} + 
2\sum_{j=[k/2]}^{k}\sum_{i=j+1}^{k} \enk{(D_i - \enk{D_i})(D_j - \enk{D_j})}.
\end{split}
\end{equation}
By Claims~\ref{cl:excursion} and~\ref{cl:localtime} and the independence between $L_i$ and $T_\ell^{(i)}$, we get that for all $i$
\[
\vnk{D_i} = \enk{T_1^{(i)}}^2 \vnk{L_i} + \enk{L_i}\vnk{T_1^{(i)}} \asymp N^2,
\]
and hence $\sum_{i=[k/2]}^{k} \vnk{D_i} \asymp N^2k$.
In view of that, it suffices to show that for~$i > j$
\begin{align}\label{eq:goalnow}
\left| \enk{(D_i - \enk{D_i})(D_j - \enk{D_j})}    \right| \lesssim N^2\frac{n_j}{n_i},
\end{align}
since then using the double exponential decay of $(n_\ell)$ completes the proof of the lemma.

Since in order to hit $0$ starting from $n_k$ the random walk must first hit $n_i$ and then $n_j$, it makes sense to split the local time $L_i$ into two terms: the time $L_{i,1}$ that $Y$ spends at $n_i$ before the first hitting time of $n_j$ and the time $L_{i,2}$ that $Y$ spends at $n_i$ after the first hitting time of $n_j$. Writing 
\[
D_{i,1} = \sum_{\ell=1}^{L_{i,1}} T_\ell^{(i)} \  \ \text{ and } \ \
D_{i,2} = \sum_{\ell=1}^{L_{i,2}} \til{T}_\ell^{(i)}, 
\]
where $\til{T}$ is an independent copy of $T$,
we have that~$D_{i,1}$ is independent of~$D_j$, and hence
\begin{align}
\label{eq:kkk}
\enk{(D_i- \enk{D_i})(D_j - \enk{D_j})} = \enk{(D_{i,2} - \enk{D_{i,2}})(D_j - \enk{D_j})}.
\end{align}
Using the independence between the local times and the delays we get
\begin{equation}
\begin{split}
\label{eq:ddd}
\enk{D_{i,2} D_j} = \enk{\econds{\sum_{\ell = 1}^{L_{i,2}}\til{T}_{\ell}^{(i)} \sum_{r=1}^{L_j} T_r^{(j)}}{L_{i,2}, L_j}{n_k}} &= \enk{L_{i,2}\enk{T_1^{(i)}} L_{j} \enk{T_1^{(j)}}} \\
&= \enk{L_{i,2} L_j} \enk{T_1^{(i)}} \enk{T_1^{(j)}}.
\end{split}
\end{equation}
If we denote by $\tau_x$ the hitting time of $x$ by the random walk $Y$, then we get
\[
\prstart{\tau_{n_i} < \tau_0 \wedge \tau_{n_j}^+}{n_j} = \frac{1}{2(n_i - n_j)}.
\]
Once the random walk $Y$ visits $n_i$, then the total number of returns to $n_i$ before hitting $n_j$ again is a geometric random variable independent of $L_j$ and of parameter 
\[
\prstart{\tau_{n_j}<\tau_{n_i}^+}{n_i}= \frac{1}{2(n_i-n_j)}.
\]
Hence we can write
\[
L_{i,2} = \sum_{\ell=1}^{L_j-1} \eta_\ell,
\]
where $\eta_\ell = 0$ with probability $1-1/(2(n_i - n_j))$ and $\theta_\ell$ with probability $1/(2(n_i - n_j))$, where $\theta_\ell$ is a geometric random variable with $\E{\theta_\ell} = 2(n_i - n_j)$. Note that $\eta_\ell$ is independent of $L_j$. Therefore we deduce
\begin{align*}
\enk{L_{i,2}L_j} = \enk{L_j\sum_{\ell=1}^{L_j-1}\eta_{\ell}} = \enk{\econd{L_j \sum_{\ell=1}^{L_j-1} \eta_\ell}{L_j}} = (\enk{L_j^2} - \enk{L_j}) \enk{\eta_1} \asymp n_j^2,
\end{align*}
where in the last step we used Claim~\ref{cl:localtime} and the fact that $\enk{\eta_\ell} =1$ for all $\ell$.
Hence combining the above with~\eqref{eq:ddd} and Claim~\ref{cl:excursion} we conclude
\begin{align}\label{eq:final}
\enk{D_{i,2}D_j} \asymp n_j^2 \frac{N}{n_i} \frac{N}{n_j} = N^2 \frac{n_j}{n_i}.
\end{align}
Using Wald's identity we obtain
\begin{align*}
\enk{D_{i,2}} \enk{D_j} \asymp N^2 \frac{n_j}{n_i}
\end{align*}
and combined with~\eqref{eq:kkk} and~\eqref{eq:final} proves~\eqref{eq:goalnow} and thus finishes the proof of the lemma.
\end{proof}

\begin{proof}[{\bf Proof of Theorem~\ref{thm:tree}} (lower bound)]

Let $t = \enk{\tau_0} - \gamma \sqrt{\vnk{\tau_0}}$, where the constant $\gamma$ will be determined later. 
By the definition of the total variation distance we get
\begin{align*}
d(t) \geq \| \prstart{X_t \in \cdot}{n_k} - \pi\| \geq \pi(\T_0) - \prstart{X_t \in \T_0}{n_k} \geq 1-o(1)- \prstart{\tau_0<t}{n_k},
\end{align*}
since $\pi(\T_0) = 1 - o(1)$.
Chebyshev's inequality gives
\begin{align*}
\prstart{\tau_0<t}{n_k} \leq \prstart{\left|\tau_0 -\enk{\tau_0}\right| > \gamma \sqrt{\vnk{\tau_0}}}{n_k} \leq \frac{1}{\gamma^2}.
\end{align*}
Hence by choosing $\gamma$ big enough we deduce that for all sufficiently large $k$ 
\begin{align*}
d(t) \geq 1 - o(1)- \frac{1}{\gamma^2} > \epsilon,
\end{align*}
which implies that $\tmix(\epsilon) \geq t$. By Lemma~\ref{lem:concentration} we thus get that
\[
\tmix(\epsilon) \geq 6Nk - c_1N\sqrt{k}
\]
for a positive constant $c_1$.
\end{proof}

\section{Coupling}\label{sec:coupling}

In this section we prove the upper bound on $\tmix(\epsilon)$ via coupling.

\begin{proof}[{\bf Proof of Theorem~\ref{thm:tree}} (upper bound)]

Let $X_0 = x$ and $Y_0\sim \pi$. Consider the following coupling. We let $X$ and $Y$ evolve independently until the first time that 
$X$ hits $0$. After that we let them continue independently until the first time they collide or reach the same level of the tree $\T_0$ in which case we change the coupling to the following one: we let $X$ evolve as a lazy simple random walk and couple $Y$ to $X$ so that $Y$ moves closer to (or further from) the root if and only if $X$ moves closer to (or further from) the root respectively. Hence they coalesce if they both hit $0$.

Let $\tau$ be the coupling time and $t = \enk{\tau_0} + \gamma\sqrt{\vnk{\tau_0}}$, where the constant $\gamma$ will be determined later in order to make $\pr{\tau>t}$ as small as we like.

Define $\tau_x^* = \inf\{ s\geq \tau_0: X_s = x \}$ for all $x$ and 
\begin{align*}
L = \sum_{s=\tau_0}^{\tau_{n_{[k/2]}}^*} \1\left(X_{s-1} \notin \T_0, X_s \in \T_0\right),
\end{align*}
i.e.\ $L$ is the number of returns to the tree $\T_0$ in the time interval $[\tau_0,\tau^*_{n_{[k/2]}}]$.
Then $L$ has the geometric distribution with parameter $1/n_{[k/2]}$. Setting $A_L=\{L> \sqrt{n_{[k/2]}}\}$
we get by the union bound 
\begin{align}\label{eq:unionbd}
\pr{A_L^c}=\pr{L\leq \sqrt{n_{[k/2]}}} \leq \frac{1}{\sqrt{n_{[k/2]}}}.
\end{align}

We also define the event that after time $\tau_0$ the random walk hits the leaves of the tree $\T_0$ before exiting the interval $[0,n_{[k/2]}]$, i.e.
\begin{align}\label{eq:defe}
E = \left\{ \tau^*_{\partial \T_0} < \tau^*_{n_{[k/2]}} \right\}.
\end{align}
Since at every return to the tree $\T_0$ with probability at least $1/3$ the random walk hits the leaves of~$\T_0$ before exiting the tree~$\T_0$, it follows that 
\begin{align}\label{eq:hittheleaves}
\prcond{\tau_{\partial \T_0}^* >\tau_{n_{[k/2]}}^*}{L}{} \leq \left(\frac{2}{3}\right)^{L}.
\end{align}
By decomposing into the events $A_L$ and $E$ we obtain
\begin{align}\label{eq:bigeq}
\nonumber\pr{\tau>t} &\leq \pr{\tau>t, A_L} + \frac{1}{\sqrt{n_{[k/2]}}} \\
& \leq \pr{\tau>t,A_L, E} +\left(\frac{2}{3}\right)^{\sqrt{n_{[k/2]}}}  + \frac{1}{\sqrt{n_{[k/2]}}}, 
\end{align}
where the first inequality follows from~\eqref{eq:unionbd} and the second one from~\eqref{eq:hittheleaves} and the fact that we are conditioning on the event $\{L>\sqrt{n_{[k/2]}}\}$.   
We now define $S$ to be the first time after $\tau^*_{\partial \T_0}$ that $X$ hits $0$, i.e.\ $S= \inf\{s\geq \tau^*_{\partial \T_0}: X_s =0\}$. 

Let~$(\xi_i)_i$ be i.i.d.\ random variables, where~$\xi_1$ is distributed as the length of a random walk excursion on the interval~$[0,n_{[k/2]}]$ conditioned not to hit~$n_{[k/2]}$. Let~$(\ell_{i,j})_{i,j}$ be i.i.d.\ random variables with~$\ell_{1,1}$ distributed as the length of a random walk excursion from the root on the tree~$\T_0$ conditioned not to hit the leaves and~$(G_i)_i$ be i.i.d.\ geometric random variables of success probability~$1/3$. Then on the event~$E$ we have
\[
S - \tau_0 \prec \sum_{i=1}^{L} \xi_i + \sum_{i=1}^{L} \sum_{j=1}^{G_i} \ell_{i,j} +\zeta,
\]
where~$\zeta$ is independent of the excursion lengths and is distributed as the commute time between the root and the leaves of the tree~$\T_0$ and~$\prec$ denotes stochastic domination. Hence, by Wald's identity we obtain
\begin{align}\label{eq:expect}
\E{(S-\tau_0)\1(E)} \leq \E{L} \E{\xi_1} + \E{L} \E{G_1} \E{\ell_{1,1}} + \E{\zeta} \lesssim n_{[k/2]}^2 + n_{[k/2]} + N \lesssim N.
\end{align} 
Let $A=\{Y_{\tau_0} \in \T_0\}$. Then $\pr{A^c} = o(1)$ as $k\to \infty$, because at time $\tau_0$ the random walk $Y$ is stationary, since until this time it evolves independently of $X$, and also the stationary probability of the tree is $1-o(1)$. It then follows
\begin{align}\label{eq:one}
\pr{\tau>t, A_L,E} \leq \pr{\tau>t, A_L, E,A} +o(1).
\end{align}
Let $\tau_1$ be the time it takes to hit the line $[0,n_k]$ starting from $x$. Let $\tau_2$ be the time it takes to hit $0$ starting from $X_{\tau_1}$. Then clearly $\tau_2$ is smaller than the time it takes to hit $0$ starting from $n_k$. Thus setting $B= \{\tau_0<\enk{\tau_0} + \gamma \sqrt{\vnk{\tau_0}}/2\}$ we obtain
\begin{align}\label{eq:two}
\nonumber\prstart{B^c}{x} &\leq \prstart{\tau_2>\enk{\tau_0} + \frac{\gamma \sqrt{\vnk{\tau_0}}}{4}}{x} +\prstart{\tau_1\geq\frac{\gamma\sqrt{\vnk{\tau_0}}}{4}}{x} \\
&\leq \prstart{\tau_0>\enk{\tau_0}+ \frac{\gamma\sqrt{\vnk{\tau_0}}}{4}}{n_k} + o(1)
\leq \frac{16}{\gamma^2} + o(1),
\end{align}
where the second inequality follows from Markov's inequality and the fact that $\estart{\tau_1}{x} \leq N$ for all $x$ and the third one follows from Chebyshev's inequality. Ignoring the $o(1)$ terms we get
\begin{align}\label{eq:three}
\pr{\tau>t,A_L,E,A} \leq \pr{\tau>t,A_L,E,A,B} + \frac{16}{\gamma^2}. 
\end{align}
We finally define the event $F=\{S-\tau_0 >\gamma\sqrt{\vnk{\tau_0}}/2\}$. We note that on the events $E$ and $A$ the two walks $X$ and $Y$ must have coalesced by time $S$. (Indeed, if $Y$ stays in $\T_0$ during the time interval $[\tau_0,\tau^*_{\partial \T_0}]$, then they must have coalesced. If $Y$ leaves the interval, since $X$ is always in $[0,n_{[k/2]}]$ until time $S$ on the event $E$, then coalescence must have happened again.) Therefore
\[
B\cap F^c \subseteq \{\tau<t\}.
\]
This in turn implies that for a positive constant $c_1$
\begin{align}\label{eq:five}
\pr{\tau>t,A_L,E,A,B} = \pr{\tau>t,A_L,E,A,B,F} \leq \pr{E,F} \leq \frac{c_1}{\gamma\sqrt{k}},
\end{align}
where the last inequality follows by applying Markov's inequality to $(S-\tau_0)\1(E)$ and using~\eqref{eq:expect} and Lemma~\ref{lem:concentration}.
Plugging~\eqref{eq:one},~\eqref{eq:two},~\eqref{eq:three} and~\eqref{eq:five} into~\eqref{eq:bigeq} gives as $k\to \infty$
\begin{align*}
\pr{\tau>t} \leq \frac{16}{\gamma^2} + o(1).
\end{align*}
Hence choosing $\gamma$ sufficiently large depending on $\epsilon$ we can make $\pr{\tau>t}<\epsilon$ and this shows that for a positive constant $c_2$
\[
\tmix(\epsilon) \leq \enk{\tau_0} + \gamma_\epsilon \sqrt{\vnk{\tau_0}} \leq 6Nk + c_2N\sqrt{k},
\]
where the last inequality follows by Lemma~\ref{lem:concentration}.
Combining this with the lower bound on $\tmix(\epsilon)$ proved in the previous section shows that there exists $c_\epsilon>0$ such that for all $0<\epsilon<1$ 
\[
\tmix(\epsilon) - \tmix(1-\epsilon) < c_\epsilon N\sqrt{k}
\]
and this completes the proof of the theorem.
\end{proof}

\section{Relaxation time}\label{sec:relaxation}

In this section we give the proof of Theorem~\ref{thm:relax}. We start by stating standard results for random walks on the interval $[0,n]$ and the binary tree. We include their proofs here for the sake of completeness. 
A detailed analysis of relaxation time for birth and death chains can be found in~\cite{ChenSaloff}.

\begin{claim}\label{cl:srwline}
Let $f$ be a function defined on $[0,n]$ satisfying $f(0)= 0$. Then 
\[
\sum_{k=1}^{n} f(k)^2 \leq n^2 \sum_{\ell=1}^{n}(f(\ell) - f(\ell-1))^2.
\]
\end{claim}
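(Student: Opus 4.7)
The plan is to use the fact that $f(0) = 0$ to write $f(k)$ as a telescoping sum of its increments, and then apply Cauchy--Schwarz to each $f(k)^2$ before summing in $k$.

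Concretely, first I would observe that since $f(0) = 0$, we can write
\[
f(k) = \sum_{\ell=1}^{k} \bigl(f(\ell) - f(\ell-1)\bigr)
\]
for every $k \in \{1,\ldots,n\}$. Applying the Cauchy--Schwarz inequality to this sum of $k$ terms against the constant vector of ones yields
\[
f(k)^2 \leq k \sum_{\ell=1}^{k} \bigl(f(\ell) - f(\ell-1)\bigr)^2 \leq k \sum_{\ell=1}^{n} \bigl(f(\ell) - f(\ell-1)\bigr)^2,
\]
where in the second step I simply enlarged the range of summation (all summands are nonnegative).

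Next I would sum this bound over $k = 1,\ldots,n$. Since the inner sum on the right no longer depends on $k$, I get
\[
\sum_{k=1}^{n} f(k)^2 \leq \left(\sum_{k=1}^{n} k\right) \sum_{\ell=1}^{n} \bigl(f(\ell) - f(\ell-1)\bigr)^2 = \frac{n(n+1)}{2} \sum_{\ell=1}^{n} \bigl(f(\ell) - f(\ell-1)\bigr)^2.
\]
Finally, since $n(n+1)/2 \leq n^2$ for all $n \geq 1$, the claim follows. There is essentially no obstacle here; the only step requiring any thought is recognising that $f(0)=0$ lets one estimate $f(k)$ by its increments, which is the standard discrete Hardy/Poincaré trick. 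The constant $n^2$ is not sharp (one actually obtains $n(n+1)/2$), but $n^2$ is all that is needed for the subsequent relaxation-time estimates.
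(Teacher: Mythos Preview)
Your proof is correct and follows essentially the same approach as the paper: write $f(k)$ as a telescoping sum of increments and apply Cauchy--Schwarz, then sum over $k$. The only cosmetic difference is that the paper applies a weighted Cauchy--Schwarz (with weights $\beta_\ell^{-2}=n-\ell$) and interchanges the order of summation, whereas you use constant weights and simply enlarge the inner sum; your version is if anything cleaner and yields the slightly better constant $n(n+1)/2$.
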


\begin{proof}[{\bf Proof}]

We set $\beta_\ell^{-2}= (n-\ell)$ for all $\ell \in [0,n]$. Then by Cauchy Schwarz we get
\begin{align*}
f(k)^2 = \left(\sum_{\ell=1}^{k}(f(\ell) - f(\ell-1)) \right)^2  \leq \sum_{\ell=1}^{k} \beta_\ell^2(f(\ell) - f(\ell-1))^2  \sum_{\ell=1}^{k} \beta_\ell^{-2}.
\end{align*}
Since $\sum_{\ell=1}^{k} \beta_\ell^{-2} \leq n^2$ for all $k \in [0,n]$ we get summing over all $k$ and interchanging sums 
\begin{align*}
\sum_{k=1}^{n} f(k)^2 \leq n^2 \sum_{\ell=1}^{n} (f(\ell) - f(\ell-1))^2
\end{align*}
and this completes the proof of the claim.
\end{proof}

\begin{claim}\label{cl:binarytree}
Let $\T$ be a binary tree on $m$ vertices with root $o$. Then there exists 
a universal constant~$c$ such that for all functions~$g$ defined on~$\T$ with $g(o)=0$ we have
\[
\|g\|^2 \leq cm\EE(g,g),
\]
where $\|g\|^2 = \sum_{x} \pi(x) g(x)^2$ and $\EE$ is the Dirichlet form $\EE(f,g) = \langle f,(I-P)g\rangle$. Here $\pi$ and $P$ are the stationary distribution and transition matrix of a simple random walk on $\T$ respectively.
\end{claim}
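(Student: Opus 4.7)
The plan is to reduce the Poincar\'e-type inequality $\norm{g}^2 \le cm\,\EE(g,g)$ to a combinatorial Hardy inequality of the form $\sum_x g(x)^2 \lesssim m\sum_e (\Delta_e g)^2$ on $\T$, and then prove that Hardy inequality by a weighted path Cauchy--Schwarz argument that uses the geometric decay of subtree sizes along any root-to-vertex path in a (balanced) binary tree.

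First I would unpack both sides in terms of edge differences. Since $\T$ has $m-1$ edges and $\pi(x)p(x,y)=1/(2(m-1))$ whenever $\{x,y\}$ is an edge of $\T$, a direct computation yields
\[
\EE(g,g) \;=\; \frac{1}{2(m-1)}\sum_{e\in E}(\Delta_e g)^2,\qquad \Delta_e g \,:=\, g(u)-g(v) \text{ for } e=\{u,v\}.
\]
Every vertex of a binary tree has degree at most $3$, so $\pi(x)\le 3/(2(m-1))$ and hence $\norm{g}^2\le \frac{3}{2(m-1)}\sum_x g(x)^2$. Therefore it is enough to show $\sum_x g(x)^2 \le c'\,m\sum_e (\Delta_e g)^2$ for some absolute constant $c'$.

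For this Hardy inequality, for each vertex $x$ I would expand $g(x)=\sum_{e\in\gamma_x}\Delta_e g$ along the unique path $\gamma_x$ from $o$ to $x$ (possible because $g(o)=0$). Writing $T_e$ for the component of $\T\setminus\{e\}$ not containing $o$, one has $|\{x : e\in\gamma_x\}|=|T_e|$. Cauchy--Schwarz with the weights $w_e=|T_e|$ then gives
\[
g(x)^2 \;\le\; \Bigl(\sum_{e\in\gamma_x}|T_e|\Bigr)\sum_{e\in\gamma_x}\frac{(\Delta_e g)^2}{|T_e|}.
\]
In a balanced binary tree the sizes $|T_{e_1}|>|T_{e_2}|>\cdots$ along any root-to-vertex path form (up to a $-1$) a geometric sequence with ratio $1/2$, so $\sum_{e\in\gamma_x}|T_e|\le 2m$ uniformly in $x$. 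Summing over $x$ and interchanging the two sums yields $\sum_x g(x)^2 \le 2m\sum_e (\Delta_e g)^2$, which combined with the first step gives $\norm{g}^2\le 6m\,\EE(g,g)$.

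The only delicate point is the choice $w_e=|T_e|$: the naive choice $w_e\equiv 1$ would produce an extra factor of $\max_x|\gamma_x|=\Theta(\log m)$ and weaken the conclusion to $\norm{g}^2\lesssim m\log m\,\EE(g,g)$. Weighting by the subtree size is precisely the choice that equalizes the two factors of Cauchy--Schwarz, to within a constant, on a balanced binary tree and cancels this logarithmic loss.
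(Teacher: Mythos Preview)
Your proof is correct and follows essentially the same approach as the paper: both reduce to the Hardy inequality $\sum_x g(x)^2 \lesssim m\sum_e(\Delta_e g)^2$, expand $g(x)$ along the root-to-$x$ path, apply a weighted Cauchy--Schwarz with geometrically varying weights, and then interchange the sums. The only difference is the choice of weight---the paper takes $w_e=2^{|e^-|}$ (so that $\sum_{e\in\gamma_x}w_e^{-1}\le 2$ and $w_e\cdot|T_e|\asymp m$), whereas you take the dual choice $w_e=|T_e|$, which has the pleasant feature that the interchange yields the exact cancellation $|\{x:e\in\gamma_x\}|/|T_e|=1$ rather than an estimate.
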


\begin{proof}[{\bf Proof}]
Since the stationary measure of a simple random walk on the tree satisfies $\pi(x) \asymp m^{-1}$ for all $x$ and $P(x,y) \asymp c$ for all $x\sim y$, we will omit them from the expressions. 

Let the depth of the tree $\T$ be $n = \lceil\log_2 m\rceil$.
Let $x_k$ be a vertex in $\T$ of level $k$. Then there exists a unique path $x_0=o, x_1, \ldots, x_k$ going from the root to $x_k$.  We can now write
\begin{align*}
g(x_k)^2 &= \left( \sum_{j=1}^{k} (g(x_{j-1})-g(x_j))\right)^2 = \left( \sum_{j=1}^{k} (g(x_{j-1})-g(x_j))2^{j/2} \frac{1}{2^{j/2}}\right)^2 \\
&\lesssim  \sum_{j=1}^{k} 2^j(g(x_{j-1})-g(x_j))^2, 
\end{align*}
where the last inequality follows by Cauchy Schwarz. 
Let $L_k$ denote all the vertices of the tree at distance $k$ from the root. For any $x \in \T$ we write 
\[
G(x) = \sum_{j=1}^{|x|} 2^j(g(y_j) - g(y_{j-1}))^2,
\]
where $|x|$ denotes the level of $x$ and $y_0=o, y_1,\ldots, y_{|x|}=x$ is the unique path joining $x$ to the root. 
By interchanging sums we obtain
\begin{align}\label{eq:bigsum}
\sum_{x \in \T_0} g(x)^2 = \sum_{k=1}^{n} \sum_{x\in L_k} g(x)^2 \lesssim \sum_k \sum_{x\in L_k} G(x).
\end{align}
Let $e$ be an edge of $\T$. We write $e=\langle e^-,e^{+}\rangle$ where $d(e^-,0)<d(e^+,0)$. For every edge $e$ we let $N(e)$ be the number of times the term $(g(e^-) - g(e^+))^2$ appears in the sum appearing on the right hand side of~\eqref{eq:bigsum}. We then get
\begin{align*}
\sum_{x \in \T} g(x)^2 \lesssim \sum_{e\in \T} 2^{|e^-|}N(e)(g(e^-) -g(e^+))^2. 
\end{align*}
Notice that $N(e)$ is the number of paths in $\T$ joining the root to the leaves and pass through $e$. Hence since the tree is of depth 
$n$ we get that $N(e) = 2^{n-|e^-|-1}$. Therefore we deduce
\begin{align*}
\sum_{x \in \T} g(x)^2  \lesssim \sum_{e\in \T} 2^{|e^-|} 2^{n-|e^-| -1} (g(e^-) - g(e^+))^2 = 2^n\sum_{e\in \T} (g(e^-) - g(e^+))^2 = m \EE(g,g)
\end{align*}
and this completes the proof of the claim. 
\end{proof}

\begin{proof}[{\bf Proof of Theorem~\ref{thm:relax}}]

To prove the lower bound on $\trel$ we use the bottleneck ratio as in~\cite[Theorem~13.14]{LevPerWil}. By setting $S = \T_0$, we see that
\[
\Phi_* \lesssim \frac{1}{N},
\]
and hence $\trel \gtrsim N$. It remains to prove a matching upper bound. 
We do that by using the variational formula for the spectral gap, which gives 
\begin{align*}
\trel = \sup_{\vars{f}{\pi} \neq 0} \frac{\vars{f}{\pi}}{\EE(f,f)}.
\end{align*}
Notice that by subtracting from $f$ its value at $0$ the ratio above remains unchanged. So we restrict to functions $f$ with $f(0) = 0$. It suffices to show that for any such $f$ 
\begin{align}\label{eq:relgoal}
\vars{f}{\pi} \lesssim N\sqrt{k} \EE(f,f).
\end{align}
Let $f$ be defined on the tree $\T$ with $f(0) = 0$. Then we can write $f= g + h$, where $g$ is zero on $\T_0^c$ and $h$ is zero on $\T_0$ and $g(0) = h(0) = 0$. We then have
\begin{align}\label{eq:new}
\vars{f}{\pi} \leq \|g+ h\|^2 = \|g\|^2 + \|h\|^2,
\end{align}
since by the definition of the functions $g$ and $h$ it follows that $\langle g,h\rangle_{\pi} =0$. 
Similarly we also get
\[
\EE(f,f) = \EE(g,g) + \EE(h,h).
\]

\begin{claim}\label{cl:restoftree}
There exists a positive constant $c$ such that
\[
\|h\|^2 \leq cN \EE(h,h).
\]
\end{claim}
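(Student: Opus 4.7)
The plan is to exploit the structure of $\T \setminus \T_0$, which is the line segment $[0, n_k]$ together with the binary trees $\T_j$ of size $m_j = N/n_j$ attached at the points $n_j$. Since $h$ vanishes on $\T_0$ (in particular $h(0) = 0$) and $|E_\T| \asymp N$ so that $\pi(x) \asymp 1/N$, I would split
\[
\|h\|^2 = \sum_{\ell=1}^{n_k} \pi(\ell)\, h(\ell)^2 + \sum_{j=[k/2]}^{k} \sum_{x \in \T_j \setminus \{n_j\}} \pi(x)\, h(x)^2.
\]
For the line, Claim~\ref{cl:srwline} applied to $\ell \mapsto h(\ell)$ gives $\sum_\ell h(\ell)^2 \leq n_k^2 \sum_\ell \Delta_\ell^2$, where $\Delta_\ell := h(\ell)-h(\ell-1)$; after multiplying by the prefactor $1/N$ and using that each line edge contributes $\asymp 1/N$ to $\EE(h,h)$, this bounds the line contribution by $n_k^2\, \EE(h,h) \leq N\, \EE(h,h)$ (since $n_k^2 \leq n_k^3 = N$).

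For each subtree $\T_j$, I would apply Claim~\ref{cl:binarytree} to the recentered function $h - h(n_j)$, which vanishes at the root $n_j$. Passing from the local to the global normalizations of $\pi$ and $\EE$ gives
\[
\sum_{x \in \T_j} \pi(x)\, (h(x) - h(n_j))^2 \lesssim m_j\, \EE^{(j)}(h, h),
\]
where $\EE^{(j)}(h, h)$ denotes the contribution of edges in $\T_j$ to the global Dirichlet form. Using $h(x)^2 \leq 2(h(x) - h(n_j))^2 + 2 h(n_j)^2$, summing over $j$, and noting $m_j \leq N$, the tree-interior contribution is bounded by $N\, \EE(h,h)$, leaving only the \emph{boundary sum}
\[
T := \sum_{j=[k/2]}^{k} \frac{h(n_j)^2}{n_j}
\]
to be controlled by $\sum_\ell \Delta_\ell^2$, which in turn is $\asymp N\, \EE(h,h)$ from the line edges.

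The main obstacle is this boundary bound: a naive Cauchy--Schwarz estimate $h(n_j)^2 \leq n_j \sum_{\ell=1}^{n_j} \Delta_\ell^2$ applied to each $j$ separately would cost a factor of $k$ after summation. To avoid this I would exploit the double-exponential growth $n_{j+1} = n_j^2$: writing $h(n_j)^2 \leq 2 h(n_{j-1})^2 + 2(h(n_j) - h(n_{j-1}))^2$ and bounding $(h(n_j)-h(n_{j-1}))^2 \leq n_j \sum_{\ell \in (n_{j-1}, n_j]} \Delta_\ell^2$, I obtain
\[
\frac{h(n_j)^2}{n_j} \leq \frac{2\, h(n_{j-1})^2}{n_j} + 2 \sum_{\ell \in (n_{j-1}, n_j]} \Delta_\ell^2.
\]
The key point is that $1/n_j = 1/n_{j-1}^2 \leq 1/(n_{[k/2]}\, n_{j-1})$, so summing over $j \in \{[k/2]+1, \ldots, k\}$ produces a term $(2/n_{[k/2]})\, T$ on the right that can be absorbed into $T$ on the left once $n_{[k/2]} \geq 4$. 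Combined with the elementary bound $h(n_{[k/2]})^2/n_{[k/2]} \leq \sum_\ell \Delta_\ell^2$ (again Cauchy--Schwarz on the line from $0$ to $n_{[k/2]}$), this yields $T \lesssim \sum_\ell \Delta_\ell^2 \lesssim N\, \EE(h,h)$, completing the proof.
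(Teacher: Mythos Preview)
Your proposal is correct and follows essentially the same route as the paper: the same splitting into line and subtree contributions, the same use of Claims~\ref{cl:srwline} and~\ref{cl:binarytree}, and the same reduction to the boundary sum $T=\sum_j h(n_j)^2/n_j$. The only difference is in the final step: the paper expands $h(n_{j-1})^2\le n_{j-1}\sum_{\ell\le n_{j-1}}\Delta_\ell^2$ directly and then interchanges sums, using $\sum_j n_{j-1}/n_j<\infty$, whereas you bound $2h(n_{j-1})^2/n_j\le (2/n_{[k/2]})\,h(n_{j-1})^2/n_{j-1}$ and absorb this back into $T$; both exploit the double-exponential growth of $(n_j)$ in equivalent ways.
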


\begin{proof}[{\bf Proof}]

Using Claim~\ref{cl:binarytree} for the function $(h(x) - h(n_j))$ restricted to $x \in \T_j$ we obtain
\begin{align}\label{eq:restrj}
\sum_{v \in \T_j} (h(v) - h(n_j))^2 \lesssim \frac{N}{n_j} \sum_{\substack{u,v \in \T_j \\ u\sim v}} (h(u) - h(v))^2.
\end{align}
Using that $(a+b)^2\leq 2a^2 + 2b^2$ and~\eqref{eq:restrj} we get
\begin{align*}
\sum_{v\in \T_j} h(v)^2 \leq 2\sum_{v\in \T_j} (h(v) - h(n_j))^2 + 2 \frac{N}{n_j} h(n_j)^2 \lesssim \frac{N}{n_j} \left(\EE(h,h) + h(n_j)^2\right).
\end{align*}
From the above inequality it immediately follows
\begin{align*}
\sum_{v\notin \T_0} h(v)^2 \leq \sum_{v \in [0,n_k]} h(v)^2 + \sum_{j=[k/2]}^{k} \sum_{v\in \T_j} h(v)^2 \lesssim \sum_{v\in [0,n_k]} h(v)^2 + \sum_{j=[k/2]}^{k} \frac{N}{n_j} h(n_j)^2 + N\EE(h,h)
\end{align*}
and hence it suffices to show
\begin{align}\label{eq:goalh}
\sum_{v\in [0,n_k]} h(v)^2 + \sum_{j=[k/2]}^{k} \frac{N}{n_j} h(n_j)^2 \lesssim N\sum_{\ell=1}^{n_k} (h(\ell) - h(\ell-1))^2.
\end{align}
Claim~\ref{cl:srwline} gives 
\begin{align*}
\sum_{v\in [0,n_k]} h(v)^2 \leq n_k^2 \sum_{\ell=1}^{n_k} (h(\ell) - h(\ell-1))^2 \leq N \sum_{\ell=1}^{n_k} (h(\ell) - h(\ell-1))^2,
\end{align*}
since $N=n_k^3$, and hence it suffices to show 
\begin{align}\label{eq:finalh}
 \sum_{j=[k/2]}^{k} \frac{h(n_j)^2 }{n_j} \lesssim \sum_{\ell=1}^{n_k} (h(\ell) - h(\ell-1))^2.
\end{align}
Setting $\Delta_\ell = h(\ell) - h(\ell-1)$ and using Cauchy Schwarz
\begin{align*}
h(n_j)^2 &\leq 2h(n_{j-1})^2 + 2(h(n_j) - h(n_{j-1}))^2 = 2 \left(\sum_{\ell=1}^{n_{j-1}} \Delta_\ell\right)^2 + 2\left(\sum_{\ell=n_{j-1}+1}^{n_j} \Delta_\ell\right)^2 \\
&\leq 2n_{j-1} \sum_{\ell=1}^{n_{j-1}} \Delta_\ell^2 + (n_j-n_{j-1})		\sum_{\ell=n_{j-1}+1}^{n_j} \Delta_\ell^2,	
\end{align*} 
and hence dividing by $n_j$ we get
\begin{align*}
\sum_{j=[k/2]}^{k} \frac{h(n_j)^2}{n_j} \leq 2\sum_{j=[k/2]}^{k} \frac{n_{j-1}}{n_j} \sum_{\ell=1}^{n_{j-1}} \Delta_\ell^2 + 2 \sum_{\ell=1}^{n_k} \Delta_\ell^2.
\end{align*}
If we fix $\ell \in [0,n_k]$, then the coefficient of $\Delta_\ell^2$ in the first sum appearing on the right hand side of the above inequality is bounded from above by $\sum_{j=1}^{k} n_{j-1}/n_j <\infty$, and hence we conclude
\[
\sum_{j=[k/2]}^{k} \frac{h(n_j)^2}{n_j}\lesssim \sum_{\ell=1}^{n_k} \Delta_\ell^2
\]
and this finishes the proof of the claim.
\end{proof}
Since $g$ satisfies the assumptions of Claim~\ref{cl:binarytree} it follows that
\[
\|g\|^2\leq cN\EE(g,g).
\]
This together with Claim~\ref{cl:restoftree} and~\eqref{eq:new} proves~\eqref{eq:relgoal} and completes the proof of the theorem.
\end{proof}

\bibliographystyle{plain}
\bibliography{biblio}

\end{document}